\documentclass[11pt,reqno]{amsart}
\usepackage{amsfonts}
\usepackage{ifthen}
\usepackage{amsthm}
\usepackage{amsmath}
\usepackage{graphicx}
\usepackage{caption}
\usepackage{subcaption}
\usepackage{amscd,amssymb}
\usepackage{color}
\usepackage{hyperref}
\usepackage{array,multirow,makecell}
\usepackage{tikz}
\usepackage{pgfplots}
\pgfplotsset{compat=1.9}

\usepackage{cite,epstopdf}

\setcellgapes{1pt}
\makegapedcells
\newcolumntype{R}[1]{>{\raggedleft\arraybackslash }b{#1}}
\newcolumntype{L}[1]{>{\raggedright\arraybackslash }b{#1}}
\newcolumntype{C}[1]{>{\centering\arraybackslash }b{#1}}
\newcounter{minutes}
\divide\time by 60
\newcounter{hours}
\multiply\time by 60 \addtocounter{minutes}{-\time}

\newtheorem{theorem}{Theorem}
\newtheorem{lemma}{Lemma}
\newtheorem{definition}{Definition}
\newtheorem{corollary}{Corollary}
\newtheorem{remark}{Remark}
\newtheorem{example}{Example}

\numberwithin{equation}{section}

\title[Certain subclass of Meromorphic function associated with Wright function]{Certain subclass of Meromorphic function associated with Wright function}

\author[A. Kumar]
{Anish Kumar}

\address{{\bf Anish Kumar}\newline
Department of Mathematics,
Dr. Shyama Prasad Mukherjee University,\newline
Ranchi 834008, Jharkhand, India}
\email{ak8107690@gmail.com}

\keywords{Meromorphic functions; Geometric Function Theory; Integral operator; Coefficient estimates; Convolution; Radii of starlikeness and convexity.}
\subjclass[2020]{30D15; 30C45; 30H10;30C45, 30C50}
\begin{document}

\begin{abstract}
In this paper, we introduce and investigate a novel subclass $\Sigma(\theta, \lambda, \gamma)$ of meromorphic functions defined in the punctured unit disk ${D}^*$. This class is constructed utilizing a specialized generalized operator $W_{\alpha, \beta}$ associated with Wright function.   We derive the exact integral representation and establish necessary and sufficient convolution (Hadamard product) conditions. Furthermore, sufficient conditions involving strict inequalities are provided for functions to be members of this class $\Sigma(\theta, \lambda, \gamma)$. Additionaly, by employing the properties of Carathéodory functions and the principle of mathematical induction, we establish  coefficient estimates for functions belonging to this new class. Finally, as an applications, we the established coefficient bounds, we determine the precise radii of meromorphic starlikeness and meromorphic convexity of order $\rho$. The results presented in this study generalize several existing outcomes in geometric Function Theory.

\end{abstract}
\maketitle

\section{Introduction and Motivation}
A complex valued function $f$ is called univalent in a domain $D$ if it maps one value to only one value. i.e $ \forall z_1, z_2\in D \implies f(z_1) \neq f(z_2)$. We recall normalized class $\mathcal A$ of analytic and univalent function in the unit disc $D =\{z\in C :|z|<1\}$ hold the condition $f(0)=0$ and $f'(0)=1.$ Thus $f\in \mathcal {A}$ has Taylor series expansion of the form $$f(z)=z+\sum_{n=1}^{\infty}a_n z^n.$$ The study of univalent function theory depend on the bounds of the coefficient. It is closely connected to the Bieberbach conjecture \cite{Bieberbach conjecture}, which is arround for more than a century, but still is very active research topic. 

Let $\Sigma$ is the family of meromorphic mappings $$f(z)=\frac{1}{z}+\sum_{n=1}^{\infty}a_n z^{n},$$ which are analytic in punctured disc $D^{*}=\{z \in C : 0<|z|<1\}=D - \{0\}$. 

Let $f,g \in \Sigma,$ where $g$ is given by $$g(z)=\frac{1}{z}+\sum_{n=1}^{\infty}b_n z^n.$$ The Hadamard product $g*f$ of $g$ and $f$ is given by $$(g*f)(z)=\frac{1}{z}+\sum_{n=1}^{\infty}a_n b_n z^n=(f*g)(z).$$

Let $\tau$ denote the class of function $P$ given by 
\begin{align}\label{eq7}
\tau(z)=1+\sum_{n=1}^{\infty}\tau_n z^n, (z\in D)
\end{align}, which are analytic in $D$ and hold the condition $\Re(\tau(z)>0$, $(z\in D).$ 

For $0\leq \gamma <1$ and $\theta$  which is real with $|\theta|<\frac{\pi}{2},$ the subclass of $f\in \Sigma$ denoted by $\Sigma S^{*}(\theta, \gamma) $ and $\Sigma K^{*} (\theta, \gamma)$ and defined respectively as \\
$$\Re\left(e^{i\theta}\frac{zf'(z)}{f(z)}\right)<-\gamma \cos\theta, (z\in D^{*}),$$
$$\Re\left(e^{i\theta}\left(\frac{zf'(z)}{f(z)}\right)'\right)<-\gamma \cos\theta, (z\in D^{*}).$$

Particularly, by putting $\theta=0$, we obtain well known meromorphic starlikeness and meromorphic convexity of order $\gamma(0\leq \gamma <1)$ for subclass $f\in \Sigma$. Many researcher investigated on meromorphic spiralike function and connected topic, (see more example and works and reference cited therein \cite{aouf, arif, shi, wang}. 

For $\gamma >1$, Wang et al. \cite{sun} and Netanyahu and Nehari \cite{nehari} studied and introduced the subclass $\Sigma (\gamma)$ of $\Sigma$ consisting of $f$ holding $$\Re\left(\frac{zf'(z)}{f(z)}\right)>-\gamma, (z\in D*.$$

Let $\mathcal{A}$ be the class of functions of the form $f(z)=z+\sum_{n=1}^{\infty}b_nz^n,$ which are holomorphic in $D$. A map $f\in \mathcal{A}$ is called in the class $\Delta^{*}(\theta,\delta, \gamma),$ if it holds the condition $$\Re\left(\frac{e^{i\theta}zf'(z)}{(1-\delta)f(z)+\delta zf'(z)}\right)>\gamma \cos\theta,$$ ($|\theta|<\frac{\pi}{2}, 0\leq \delta <1, 0\leq \gamma <1, z\in D$). An analogous of the class $\Delta*(\theta, \delta,z)$ has been investigated in \cite{orhan}. This class has been studied and introduced by Orhan et.al.\cite{orhan} Previously, Aouf \cite{aouf}, Srivastava and Liu \cite{liu}, Srivastava and Raina \cite{raina} derived many result associated with $H_m^l [\alpha, \beta],$ where \begin{align*}
H_m^l[\alpha, \beta]f(z)&=H(\alpha_1, \cdots, \alpha_l, \beta_1,\cdots, \beta_m)f(z)\\
&=[z^{-1} {_lF_m}(\alpha_1,\cdots,\alpha_l, \beta_1,\cdots,\beta_m;z ]*f(z),
\end{align*}
for $$_lf_m=\sum_{n=0}^{\infty}\frac{(\alpha_1)_n\cdots(\alpha_l)_n}{(\beta_1)_n\cdots(\beta_m)_n}\frac{z^n}{n!,}$$
$\alpha_s>0, \beta_t>0(s=1,\cdots,l, t=1,\cdots,m, l\leq m+1; l,m\in N)$ as a hypergeometric function. \\
Particularly values of parameter Liu and Srivastava  \cite {yang} introduced and studied $L(a,c)f(z)=H[\alpha_1,1,\beta_1]f(z), z\in D^{*}.$\\
Further this was investigated Srivastava et.al \cite{srivastva} and also in . Wang et.al \cite{wang} denote a subclass $\Sigma$ of function $$\Re\left(\frac{e^{i\theta} (1-2\delta)z(H_m^l[\alpha, \beta]f(z))'}{(1-\delta)H_m^l[\alpha, \beta]f(z)+\delta z (H_m^l[\alpha, \beta]f(z))'}\right)>-\gamma \cos \theta, (z\in D^{*}).$$
For which integral representation, bound of coefficient estimates and other sufficient properties has been discussed.

The Wright function is a versatile and very important special function that has found broad application across several branches of engineering and science. It plays a vital role in mathematical physics, fractional calculus and approximation theory among other fields. 

The Wright function \begin{align}\label{eqwright}W_{\alpha, \beta}(z)=\sum_{n=1}^{\infty}\frac{z^n}{\Gamma(\alpha n+\beta)n!}, \beta,z\in C, \alpha>-1,\end{align} was firstly introduced in 1933 by E. M. Wright \cite{Wright33}, with the theory of partition. It also plays a vital role in integral equation of Hankel type, stochastic process and mikusiski operational calculus. For origin about Wright function and applcation, one can refer  \cite{Wright33}. It can be seen that Wright function is an entire function of order $\frac{1}{1+\alpha}$, which is also generated by Bessel function. It create generalize hypergeometric function too . Certain geometric properties and integral representation have been studied in \cite{anish, harimohan, baricz, das}. 

For $f\in \Sigma$, we introduce meromorphically modified version of linear operator $W_{\alpha,\beta}f(z) =\frac{1}{z}W_{\alpha, \beta}(z)*f(z)$, $\alpha>-1, \beta>0, z\in D.$
We can easily note that \begin{equation}\label{eq5}
W_{\alpha, \beta}(z)f(z)=\frac{1}{z}+\sum_{n=1}^{\infty}\phi_n a_n z^n, z\in D^{*}, \quad where \quad \phi_n =\phi_n(\alpha, \beta)=\frac{1}{\Gamma(\alpha n+ \beta)n!},
\end{equation}
with the help of operation $W_{\alpha, \beta}(z)f(z)$ . We define a operator $0\leq \delta <\frac{1}{n}$, $\gamma>1$  has $\Sigma(\theta, \delta, \gamma)$ denote the subclass of $\Sigma$ contains of functions holding the conditions :
$$\Re\left(\frac{e^{i\theta} z(W_{\alpha, \beta}f(z))'}{(1-\lambda)W_{\alpha, \beta}f(z)+\lambda z( W_{\alpha, \beta}f(z))'}\right)>\gamma \cos \theta, (z\in D^{*}),$$
where $W_{\alpha, \beta}$ is given as \eqref{eqwright}.

To derive one of the main result, we need following lemma. 
\begin{lemma} \label{lem:sequence_An}
Suppose that the sequence $\{A_n\}_{n=1}^{\infty}$ is defined by
\begin{equation*}
A_1 = \frac{(1-2\lambda)\cos\theta(1+\gamma(1-2\lambda))}{(1-\lambda)\phi_1(\alpha, \beta)},
\end{equation*}
\begin{equation*}
\begin{aligned}
A_{n+1} &= \frac{2\cos\theta(1+\gamma(1-2\lambda))}{(n+2)(1-\lambda)\phi_{n+1}(\alpha, \beta)} \\
&\quad \times \left[ 1-2\lambda + \sum_{k=1}^{n} \phi_k(\alpha, \beta )(1-\lambda+k\lambda)A_k \right].
\end{aligned}
\end{equation*}
Then
\begin{equation*} \label{eq:An_explicit}
\begin{aligned}
A_n &= \frac{(1-2\lambda)\cos\theta(1+\gamma(1-2\lambda))}{(1-\lambda)^n \phi_n(\alpha, \beta; l, m)} \\
&\quad \times \prod_{k=1}^{n-1} \frac{(k+1)(1-\lambda) + 2(1-\lambda+k\lambda)\cos\theta(1+\gamma(1-2\lambda))}{k+2}, \quad (n \ge 2).
\end{aligned}
\end{equation*}
\end{lemma}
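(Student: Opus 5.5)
The plan is to avoid a direct induction on the product formula and instead telescope the partial sums that appear in the recursion. Throughout I abbreviate
\[
C:=\cos\theta\,(1+\gamma(1-2\lambda)), \qquad S_n:=1-2\lambda+\sum_{k=1}^{n}\phi_k(\alpha,\beta)(1-\lambda+k\lambda)A_k \quad (n\ge 0),
\]
so that $S_0=1-2\lambda$. I read $\phi_n(\alpha,\beta;l,m)$ in the statement as $\phi_n(\alpha,\beta)$ from \eqref{eq5}, since no other meaning has been defined. I also assume $\lambda\neq 1$ and $\phi_n\neq 0$, which are needed for the recursion to make sense at all.

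\textbf{Step 1: restate the recursion and the base case.} In this notation the recursion reads
\[
A_{n+1}=\frac{2C\,S_n}{(n+2)(1-\lambda)\phi_{n+1}}\qquad(n\ge1).
\]
The given value of $A_1$ fits the same pattern at $n=0$, because
\[
A_1=\frac{(1-2\lambda)C}{(1-\lambda)\phi_1}=\frac{2C\,S_0}{2(1-\lambda)\phi_1}.
\]
So the formula for $A_{n+1}$ holds for every $n\ge 0$.

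\textbf{Step 2: a first-order recursion for $S_n$.} Since $S_n=S_{n-1}+\phi_n(1-\lambda+n\lambda)A_n$, I substitute the Step 1 expression for $A_n$ (with $n-1$ in place of $n$). The factor $\phi_n$ cancels, which gives
\[
S_n=S_{n-1}\Bigl[1+\frac{2C(1-\lambda+n\lambda)}{(n+1)(1-\lambda)}\Bigr]
=S_{n-1}\,\frac{(n+1)(1-\lambda)+2C(1-\lambda+n\lambda)}{(n+1)(1-\lambda)}.
\]
Iterating this from $S_0=1-2\lambda$ yields
\[
S_{n}=(1-2\lambda)\prod_{k=1}^{n}\frac{(k+1)(1-\lambda)+2C(1-\lambda+k\lambda)}{(k+1)(1-\lambda)}.
\]
This is the same numerator product that appears in the lemma.

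\textbf{Step 3: substitute back and match constants.} Plugging $S_{n-1}$ into Step 1 gives, for $n\ge2$,
\[
A_n=\frac{2C(1-2\lambda)}{(n+1)(1-\lambda)^{n}\phi_n}\cdot\frac{1}{n!}\prod_{k=1}^{n-1}\bigl[(k+1)(1-\lambda)+2C(1-\lambda+k\lambda)\bigr].
\]
Here $\frac{1}{n!}=\prod_{k=1}^{n-1}\frac{1}{k+1}$, and the power $(1-\lambda)^n$ collects the $n-1$ factors from the product together with the one from Step 1.

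It remains to check that this equals the stated form. The stated formula carries the constant $C$ and the denominator $\prod_{k=1}^{n-1}(k+2)=\frac{(n+1)!}{2}$. The expression above carries $2C$ and the denominator $(n+1)\,n!=(n+1)!$. Both therefore equal $\frac{2C}{(n+1)!}$, so they agree.

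\textbf{Expected obstacle.} The only delicate part is this bookkeeping: the index shift between $\prod 1/(k+1)$ and $\prod 1/(k+2)$, the factor $2$, and the exponent of $(1-\lambda)$. The cleanest way to handle it is to normalize both sides to $\frac{2C}{(n+1)!}$ as above. A short induction on $n$ verifying Step 2 would serve as a fallback if the telescoping presentation seems too terse.
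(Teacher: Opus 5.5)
Your argument is correct and is essentially the paper's. The paper subtracts consecutive instances of the recursion to get $(n+2)(1-\lambda)\phi_{n+1}A_{n+1}=\bigl[(n+1)(1-\lambda)+2C(1-\lambda+n\lambda)\bigr]\phi_nA_n$, with $C$ being the paper's $\Lambda$, and then telescopes the ratios $A_{n+1}/A_n$; this is your Step 2 recurrence rewritten via $(n+2)(1-\lambda)\phi_{n+1}A_{n+1}=2CS_n$. Your observation that $A_1$ fits the recursion with $S_0=1-2\lambda$ justifies the factor $A_2/A_1$, which the paper uses without comment, and working with $S_n$ avoids dividing by $A_n$.
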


\begin{proof}
For convenience of notation, let $\Lambda = \cos\theta(1+\gamma(1-2\lambda))$. From the definition of $A_{n+1}$, we can write:
\begin{equation*}
\begin{aligned}
&(n+2)(1-\lambda)\phi_{n+1}(\alpha, \beta)A_{n+1} \\
&= 2\Lambda \left[ 1-2\lambda + \sum_{k=1}^{n} \phi_k(\alpha, \beta)(1-\lambda+k\lambda)A_k \right],
\end{aligned}
\end{equation*}
and replace $n$ by $n-1$ yields:
\begin{equation*}
\begin{aligned}
&(n+1)(1-\lambda)\phi_n(\alpha, \beta)A_n \\
&= 2\Lambda \left[ 1-2\lambda + \sum_{k=1}^{n-1} \phi_k(\alpha, \beta)(1-\lambda+k\lambda)A_k \right].
\end{aligned}
\end{equation*}
Subtracting the above two equation, we find that:
\begin{equation} \label{eq:ratio_subtraction}
\begin{aligned}
&(n+2)(1-\lambda)\phi_{n+1}(\alpha, \beta)A_{n+1} - (n+1)(1-\lambda)\phi_n(\alpha, \beta)A_n \\
&= 2\Lambda \phi_n(\alpha, \beta)(1-\lambda+n\lambda)A_n.
\end{aligned}
\end{equation}
Combining \eqref{eq:ratio_subtraction}, we find that the ratio is:
\begin{equation} \label{eq:An_ratio}
\frac{A_{n+1}}{A_n} = \frac{(n+1)(1-\lambda) + 2(1-\lambda+n\lambda)\Lambda}{(n+2)(1-\lambda)} \cdot \frac{\phi_n(\alpha, \beta)}{\phi_{n+1}(\alpha, \beta)}.
\end{equation}
Thus, for $n \ge 2$, we deduce from \eqref{eq:An_ratio} that
\begin{equation*}
\begin{aligned}
A_n &= \frac{A_n}{A_{n-1}} \cdots \frac{A_3}{A_2} \cdot \frac{A_2}{A_1} \cdot A_1 \\
&= \frac{n(1-\lambda) + 2(1-\lambda+(n-1)\lambda)\Lambda}{(n+1)(1-\lambda)} \cdots \\
&\quad \times \frac{3(1-\lambda) + 2(1+\lambda)\Lambda}{4(1-\lambda)} \cdot \frac{2(1-\lambda) + 2\Lambda}{3(1-\lambda)} \\
&\quad \times \frac{\phi_{n-1}(\alpha, \beta; l, m)}{\phi_n(\alpha, \beta; l, m)} \cdots \frac{\phi_2(\alpha, \beta; l, m)}{\phi_3(\alpha, \beta; l, m)} \cdot \frac{\phi_1(\alpha, \beta; l, m)}{\phi_2(\alpha, \beta; l, m)} \\
&\quad \times \frac{\Lambda(1-2\lambda)}{(1-\lambda)\phi_1(\alpha, \beta; l, m)} \\
&= \frac{\Lambda(1-2\lambda)}{(1-\lambda)^n \phi_n(\alpha, \beta; l, m)} \\
&\quad \times \prod_{k=1}^{n-1} \frac{(k+1)(1-\lambda) + 2(1-\lambda+k\lambda)\Lambda}{k+2}.
\end{aligned}
\end{equation*}

Substituting $\Lambda = \cos\theta(1+\gamma(1-2\lambda))$ back into the desired result. This completes the proof of Lemma \ref{lem:sequence_An}.
\end{proof}

\section{Main Results}
In this section, we derive the exact integral representation and establish necessary and sufficient convolution (Hadamard product) conditions. Furthermore, sufficient conditions involving strict inequalities are provided for functions to be members of the class $\Sigma(\theta, \lambda, \gamma)$ . Additionally, We establish  coefficient estimates for functions belonging to this new class.

\begin{theorem}
Let $f \in \Sigma(\theta, \lambda,\gamma)$. Then the integral representation of $W_{\alpha, \beta}f(z)$ is given by
\begin{equation}
W_{\alpha, \beta}f(z) = z^{-1} \exp \left( \int_{0}^{z} \frac{1}{t} \left[ \frac{(1-\lambda) A(t)}{1-\lambda A(t)} + 1 \right] dt \right), \quad (z \in {D}^{*}),
\end{equation}
where $w$ is an analytic function in $\mathbb{D}$ with $w(0)=0$ and $|w(z)|<1$, and $A(t)$ is defined as
\begin{equation*}
A(t) = \frac{e^{-2i\theta}-e^{-i\theta}\big(\cos\theta+\gamma\cos\theta(1-2\lambda)\big)w(t)}{(1-2\lambda)(1-w(t))}.
\end{equation*}
\end{theorem}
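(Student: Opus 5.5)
Write $F(z)=W_{\alpha,\beta}f(z)$ and
\[
Q(z)=\frac{zF'(z)}{(1-\lambda)F(z)+\lambda zF'(z)},
\]
the quantity in the definition of $\Sigma(\theta,\lambda,\gamma)$. The plan is the usual Schwarz-lemma/subordination argument. First, express the defining condition as a subordination: after a suitable affine normalization, $e^{i\theta}Q$ maps $D$ into a half-plane. Second, write that subordination with a Schwarz function $w$. Third, solve the resulting relation for the logarithmic derivative $zF'/F$ and integrate.

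\textbf{Step 1: the Carathéodory function.} Since $F(z)=z^{-1}+\sum\phi_n a_n z^n$, we have $zF'(z)=-z^{-1}+\cdots$. Hence $Q(0)=-1/(1-2\lambda)$ after removing the singularity. The half-plane condition $\Re(e^{i\theta}Q)>\gamma\cos\theta$ (or its sign-flipped counterpart, whichever makes the constants consistent with $A(t)$) lets us form a function $p$, analytic in $D$, with $p(0)=1$ and $\Re p>0$. Concretely, $p$ is $e^{i\theta}Q$ minus the boundary constant $\Gamma=\cos\theta+\gamma\cos\theta(1-2\lambda)$ (suitably scaled by $1-2\lambda$), plus $i\sin\theta$, divided by the appropriate multiple of $\cos\theta$. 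By the standard representation $p=(1+w)/(1-w)$, with $w$ analytic, $w(0)=0$ and $|w|<1$, we then solve for $Q$. The target is the identity $Q(z)=A(z)$ up to the sign convention, where
\[
A=\frac{e^{-2i\theta}-e^{-i\theta}\Gamma w}{(1-2\lambda)(1-w)}.
\]
Checking this, i.e.\ matching the constants $e^{-2i\theta}$, $e^{-i\theta}$ and $\Gamma$, is a routine but fiddly computation.

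\textbf{Step 2: invert for $zF'/F$.} From $Q=A$ we get $zF'=A\big((1-\lambda)F+\lambda zF'\big)$, so
\[
\frac{zF'(z)}{F(z)}=\frac{(1-\lambda)A(z)}{1-\lambda A(z)}.
\]
This requires $1-\lambda A\neq 0$, which follows because $Q$ itself is finite, i.e.\ the denominator of $Q$ does not vanish on $D^*$.

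\textbf{Step 3: integrate.} Consider $G(z)=zF(z)$, which is analytic with $G(0)=1$. Then
\[
\frac{zG'}{G}=1+\frac{zF'}{F}=1+\frac{(1-\lambda)A}{1-\lambda A}.
\]
The expression $\big[1+\frac{(1-\lambda)A(t)}{1-\lambda A(t)}\big]/t$ must be integrable at $0$. This should hold because, at $t=0$, $zG'/G$ vanishes, equivalently $A(0)$ gives $zF'/F=-1$. That is exactly the consistency condition which pins down the normalization in Step 1, so I would verify it there.

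Integrating $G'/G$ from $0$ to $z$ with $G(0)=1$ gives
\[
G(z)=\exp\int_0^z \frac{1}{t}\Big[\frac{(1-\lambda)A(t)}{1-\lambda A(t)}+1\Big]\,dt,
\]
and dividing by $z$ yields the stated formula.

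\textbf{Main obstacle.} The main obstacle is Step 1: choosing the normalization of $p$ so that $p(0)=1$ holds exactly and the resulting expression for $Q$ coincides with the given $A(t)$. In particular the sign of the inequality and the factors $e^{-2i\theta}$ versus $e^{-i\theta}$ must be reconciled. I would first compute $Q(0)$ explicitly and fix the affine map of the half-plane onto $\{\Re p>0\}$ from that value, then read off $A$.
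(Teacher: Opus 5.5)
Your outline (Carath\'eodory function, Schwarz function, solve for $zF'/F$, integrate $\log(zF)$) is the same as the paper's. However, the step you called ``routine but fiddly'' is exactly where it breaks, and no choice of normalization or sign convention can fix it. Whatever relation $Q=\Phi(w)$ you extract from $p=(1+w)/(1-w)$, evaluating at $z=0$ (where $w=0$) gives $\Phi(0)=Q(0)=-1/(1-2\lambda)$. That value is fixed by $F(z)=z^{-1}+\cdots$ alone. The displayed $A$ has $A(0)=e^{-2i\theta}/(1-2\lambda)$. The two agree only when $e^{2i\theta}=-1$, i.e.\ $\cos\theta=0$, which is the degenerate case excluded by $|\theta|<\pi/2$.

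Equivalently, the consistency check you postponed to Step~3 fails:
\[
\frac{(1-\lambda)A(0)}{1-\lambda A(0)}+1=\frac{1+(1-2\lambda)A(0)}{1-\lambda A(0)}=\frac{2(1-2\lambda)\cos\theta\,e^{-i\theta}}{1-2\lambda-\lambda e^{-2i\theta}}\neq 0,
\]
so the integrand has a non-integrable singularity at $t=0$ and $\int_0^z$ diverges. The identity $Q\equiv A$ is therefore false. The paper's proof, which follows your outline, reaches this $A$ only through an algebraic slip when solving $\tau=(1+w)/(1-w)$ for $Q$.

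To carry out Step~1 correctly, take for example $p=\bigl(e^{i\theta}Q-\gamma\cos\theta+\frac{i\sin\theta}{1-2\lambda}\bigr)/\bigl(-\cos\theta(\frac{1}{1-2\lambda}+\gamma)\bigr)$, which does satisfy $p(0)=1$. Setting $p=(1+w)/(1-w)$ gives
\[
Q=-\frac{1}{1-2\lambda}\Bigl(1+\frac{2\Lambda e^{-i\theta}w}{1-w}\Bigr)=\frac{-1-\bigl(e^{-2i\theta}+2\gamma(1-2\lambda)\cos\theta\,e^{-i\theta}\bigr)w}{(1-2\lambda)(1-w)},\qquad \Lambda=\cos\theta\bigl(1+\gamma(1-2\lambda)\bigr).
\]
With this expression in place of $A$, your Steps~2 and~3 go through verbatim. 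Now $1+(1-2\lambda)A(0)=0$, so the integrand is analytic at $t=0$. The missing piece is therefore not a technicality: you must compute $A$ rather than aim for the displayed one, which cannot be derived.

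A smaller point: $1-\lambda Q=(1-\lambda)F/\bigl((1-\lambda)F+\lambda zF'\bigr)$ vanishes exactly at zeros of $F$. So finiteness of $Q$ does not give $1-\lambda A\neq0$. You need $F\neq 0$ on $D^*$, which taking $\log(zF)$ requires anyway; the paper also assumes this tacitly.
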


\begin{proof}
Assume that $f\in \Sigma(\theta, \lambda, \gamma)$ and let $\tau(z)$ be defined as
\begin{equation}\label{eqtau}
\tau(z) = \frac{\frac{e^{i\theta} z(W_{\alpha, \beta}f(z))'}{(1-\lambda)W_{\alpha, \beta}f(z)+\lambda z (W_{\alpha, \beta}f(z))'} -\gamma\cos\theta +\frac{i\sin\theta }{1-2\lambda}}{-\cos\theta\left({\frac{1}{1-2\lambda}+\gamma}\right)}, \quad (z\in {D}).
\end{equation}
By the definition of the class $\Sigma(\theta, \lambda, \gamma)$, we know that $\tau \in \mathcal{P}$. Using the properties of Carathéodory functions, there exists a Schwarz function $w(z)$, analytic in $\mathbb{D}$ with $w(0)=0$ and $|w(z)|<1$, such that
\begin{equation}\label{eq:schwarz_relation}
\frac{\frac{e^{i\theta} z(W_{\alpha, \beta}f(z))'}{(1-\lambda)W_{\alpha, \beta}f(z)+\lambda z( W_{\alpha, \beta}f(z))'} -\gamma\cos\theta +\frac{i\sin\theta }{1-2\lambda}}{-\cos\theta\left({\frac{1}{1-2\lambda}+\gamma}\right)} = \frac{1+w(z)}{1-w(z)}.    
\end{equation}
Rearranging \eqref{eq:schwarz_relation} to isolate the core fractional operator, we get
\begin{equation*}
\frac{e^{i\theta} z(W_{\alpha, \beta}f(z))'}{(1-\lambda)W_{\alpha, \beta}f(z)+\lambda z( W_{\alpha, \beta}f(z))'} = \frac{1+w(z)}{1-w(z)}\left(\frac{-\cos\theta}{1-2\lambda} - \gamma\cos\theta\right) + \gamma\cos\theta - \frac{i\sin\theta }{1-2\lambda}.
\end{equation*}
Dividing both sides by $e^{i\theta}$ and simplifying the algebraic expression using Euler's formula yields
\begin{equation}\label{eq:A_definition}
\frac{z(W_{\alpha, \beta}f(z))'}{(1-\lambda)W_{\alpha, \beta}f(z)+\lambda z( W_{\alpha, \beta}f(z))'} = \frac{e^{-2i\theta}-e^{-i\theta}\big(\cos\theta+\gamma\cos\theta(1-2\lambda)\big)w(z)}{(1-2\lambda)(1-w(z))}.
\end{equation}
Let the right-hand side of equation \eqref{eq:A_definition} be denoted as $A(z)$. We can now solve for the logarithmic derivative. Let $H(z) = W_{\alpha, \beta}f(z)$. Substituting $A(z)$ gives:
\begin{align*}
\frac{\frac{zH'(z)}{H(z)}}{(1-\lambda) + \lambda \frac{zH'(z)}{H(z)}} &= A(z) \\
\frac{zH'(z)}{H(z)} &= (1-\lambda)A(z) + \lambda A(z) \frac{zH'(z)}{H(z)} \\
\frac{zH'(z)}{H(z)} \big(1 - \lambda A(z)\big) &= (1-\lambda)A(z) \\
\frac{zH'(z)}{H(z)} &= \frac{(1-\lambda)A(z)}{1-\lambda A(z)}.
\end{align*}
Dividing the entire equation by $z$ and adding $\frac{1}{z}$ to both sides, we obtain
\begin{equation*}
\frac{H'(z)}{H(z)} + \frac{1}{z} = \frac{1}{z} \left[ \frac{(1-\lambda)A(z)}{1-\lambda A(z)} \right] + \frac{1}{z}.
\end{equation*}
Integrating both sides of the above equation with respect to $t$ from $0$ to $z$ gives
\begin{equation}\label{eq:integration}
\log\big(z W_{\alpha, \beta}f(z)\big) = \int_{0}^{z} \frac{1}{t} \left( \frac{(1-\lambda)A(t)}{1-\lambda A(t)} + 1 \right) dt.
\end{equation}
By exponentiating equation \eqref{eq:integration} and dividing by $z$, we obtain the desired result:
\begin{equation*}
W_{\alpha, \beta}f(z) = z^{-1} \exp \left( \int_{0}^{z} \frac{1}{t} \left[ \frac{(1-\lambda) A(t)}{1-\lambda A(t)} + 1 \right] dt \right).
\end{equation*}
This completes the proof.
\end{proof}

\begin{remark}
If one desires to express the function $f(z)$ explicitly, it can be obtained by applying the Hadamard product (convolution).  Then,

$$
f(z) = ( W_{\alpha,\beta})^{(-1)}(z) * \left[ z^{-1} \exp \left( \int_{0}^{z} \frac{1}{t} \left[ \frac{(1-\lambda) A(t)}{1-\lambda A(t)} + 1 \right] dt \right) \right].
$$

\end{remark}

\begin{theorem}\label{thm1}
Suppose $\eta \in \mathbb{C}$ with $|\eta| = 1$ and $\eta \neq 1$. Then $f \in \Sigma(\theta, \lambda, \gamma)$ if and only if   
\begin{equation*}
\begin{aligned}
f(z) \ast \Bigg[ &(1-2\lambda)(1-\eta) \left( \frac{-1}{z} + \sum_{n=1}^{\infty} \frac{n}{\Gamma(\alpha n+\beta)n!} z^n \right) \\
&+ e^{-i\theta} \Big[ -\gamma\cos\theta(1-2\lambda)(1-\eta) + i\sin\theta(1-\eta) \\
&\quad + (1+\eta)\cos\theta\big(1 + \gamma(1-2\lambda)\big) \Big] \left( \frac{1}{z} + \sum_{n=1}^{\infty} \frac{1-\lambda+\lambda n}{\Gamma(\alpha n+\beta)n!} z^n \right) \Bigg] \neq 0, 
\end{aligned}
\end{equation*}
for all $z \in {D}^{*}$.
\end{theorem}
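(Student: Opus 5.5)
This is the standard convolution characterization, proved through the subordination form of the defining inequality. Write $H(z)=W_{\alpha,\beta}f(z)=\frac1z+\sum_{n\ge1}\phi_n a_n z^n$. As in the proof of the integral representation theorem, $f\in\Sigma(\theta,\lambda,\gamma)$ amounts to the function $\tau$ in \eqref{eqtau} being a Carathéodory function with $\tau(0)=1$. I would first check the normalization at $z=0$: since $zH'(z)\to -1/z$ and the denominator behaves like $(1-2\lambda)/z$, the quotient tends to $-1/(1-2\lambda)$. Plugging this into \eqref{eqtau} gives $\tau(0)=1$.

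\emph{Reduction.} A function $p$ with $p(0)=1$ has positive real part in $D$ if and only if $p(z)\neq \frac{1+\eta}{1-\eta}$ for all $z\in D$ and all $|\eta|=1$, $\eta\ne1$. This holds because $\{\frac{1+\eta}{1-\eta}\}$ is exactly the imaginary axis, and, by connectedness of $p(D)$ together with $p(0)=1$, avoiding the axis is equivalent to $\Re p>0$. So $f$ lies in the class if and only if $\tau(z)\neq\frac{1+\eta}{1-\eta}$ for all such $\eta$ and all $z\in D^*$; the point $z=0$ is automatic.

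\emph{Clearing denominators.} Set $N(z)=zH'(z)$ and $M(z)=(1-\lambda)H(z)+\lambda zH'(z)$. Substituting the quotient $N/M$ into \eqref{eqtau}, the condition $\tau\ne\frac{1+\eta}{1-\eta}$ becomes a statement about a linear combination of $N$ and $M$. Multiply through by $(1-\eta)(1-2\lambda)M(z)$ and by $e^{-i\theta}$. This is legitimate because $M\neq0$ in $D^*$, which follows from $\tau$ being finite, i.e.\ analytic, for members of the class. The condition then reads
\[
(1-2\lambda)(1-\eta)N(z)+e^{-i\theta}\Big[-\gamma\cos\theta(1-2\lambda)(1-\eta)+i\sin\theta(1-\eta)+(1+\eta)\cos\theta\big(1+\gamma(1-2\lambda)\big)\Big]M(z)\neq0 .
\]
The main bookkeeping obstacle is this step. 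One must track the $i\sin\theta/(1-2\lambda)$ term and the factor $e^{-i\theta}$ so that the bracket comes out exactly as in the statement, with the sign conventions of \eqref{eqtau}, including the negative denominator $-\cos\theta(\frac1{1-2\lambda}+\gamma)$. I would expand carefully and match coefficients of $N$ and $M$ separately.

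\emph{Convolution form.} Finally, express $N$ and $M$ as Hadamard products with $f$. Since $zH'(z)=-\frac1z+\sum n\phi_n a_n z^n$, we get $N=f*\big(-\frac1z+\sum_{n\ge1} n\phi_n z^n\big)$. Likewise $M=(1-\lambda)H+\lambda zH'$ has $z^{-1}$-coefficient $1-2\lambda$ and $z^n$-coefficients $(1-\lambda+\lambda n)\phi_n a_n$.

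The $z^{-1}$ coefficient of $M$ needs care: it is $1-2\lambda$, not $1$ as the second kernel displays. I would handle this either by noting the paper's normalization, or by absorbing the constant into the bracket. If the two do not agree, I would state the kernel as $\frac{1-2\lambda}{z}+\sum\frac{1-\lambda+\lambda n}{\Gamma(\alpha n+\beta)n!}z^n$. Linearity of convolution then turns the cleared condition into $f*[\cdots]\neq0$ with $\phi_n=1/(\Gamma(\alpha n+\beta)n!)$.

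Both directions follow because every step is an equivalence. For the converse, the nonvanishing of the convolution gives that $M$ has no zeros in $D^*$ (take the combination with suitable $\eta$), so $\tau$ is analytic with $\tau(0)=1$ omitting the imaginary axis, hence $\Re\tau>0$.
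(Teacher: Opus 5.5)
Your argument follows the paper's proof. You turn membership into $\tau(z)\neq\frac{1+\eta}{1-\eta}$ for all $|\eta|=1$, $\eta\neq1$; the statement has to be read with this quantifier, as both you and the paper do. You then multiply by $(1-2\lambda)(1-\eta)M$ and by $e^{-i\theta}$, and rewrite $N=zH'$ and $M=(1-\lambda)H+\lambda zH'$ as Hadamard products with $f$. Your cleared expression agrees with the bracket in the statement.

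Your concern about the $z^{-1}$ coefficient is justified. The paper's second convolution identity, written with $\lambda(zW_{\alpha,\beta}f)'$, asserts $M=\frac1z+\cdots$, whereas in fact $M=\frac{1-2\lambda}{z}+\sum(1-\lambda+\lambda n)\phi_na_nz^n$. With the kernel as printed, the convolution equals your cleared expression plus $2\lambda e^{-i\theta}[\cdots]/z$. So the equivalence holds only after replacing $\frac1z$ by $\frac{1-2\lambda}{z}$ in the second kernel, or when $\lambda=0$.

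Like the paper, you take for granted that membership means $\Re\tau>0$. The denominator $-\cos\theta\bigl(\frac{1}{1-2\lambda}+\gamma\bigr)$ of $\tau$ is negative, so $\Re\tau>0$ is the condition $\Re\bigl(e^{i\theta}N/M\bigr)<\gamma\cos\theta$. That is the reverse of the displayed definition, so the theorem is really about the class defined through $\tau$.

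The step that fails is in the converse: nonvanishing of the convolution does not give $M\neq0$ on $D^*$ ``by a suitable $\eta$''. At a zero $z_0$ of $M$ with $N(z_0)\neq0$, the expression equals $(1-2\lambda)(1-\eta)N(z_0)$. This is nonzero for every admissible $\eta$, so the hypothesis holds at $z_0$ itself. The missing argument is local:
\begin{itemize}
\item A common zero of $N$ and $M$ makes the expression vanish, so it is excluded.
\item Otherwise $\tau$ has a pole at $z_0$. Hence it attains every value $iT$ with $|T|$ large at points $z_1\neq z_0$ close to $z_0$ where $M(z_1)\neq0$. Choosing $\eta_1$ with $\frac{1+\eta_1}{1-\eta_1}=iT$ makes the expression vanish at $z_1$, a contradiction.
\end{itemize}
Only then is $\tau$ analytic in $D$ (at $0$ because $zM(z)\to1-2\lambda\neq0$), and your connectedness argument applies.

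Similarly, in the forward direction $M\neq0$ is part of the definition rather than a consequence of $\tau$ being analytic. For $\lambda\neq0$, a multiple zero of $H$ is a common zero of $N$ and $M$ at which $N/M$ has a removable singularity.
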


\begin{proof}
Since $f \in \Sigma(\theta, \lambda, \gamma)$, the function $\tau(z)$ defined in \eqref{eq7} belongs to the Carathéodory class ${P}$. It is a well-known property that functions in ${P}$ evaluated in ${D}$ do not take values on the boundary of the right half-plane. Therefore, $\tau(z) \neq \frac{1+\eta}{1-\eta}$ for any $|\eta|=1, \eta \neq 1$. 

Thus, $f \in \Sigma(\theta, \lambda, \gamma)$ if and only if
\begin{equation} \label{eq:iff_start}
\frac{\frac{e^{i\theta} z(W_{\alpha, \beta}f(z))'}{(1-\lambda)W_{\alpha, \beta}f(z)+\lambda z( W_{\alpha, \beta}f(z))'} -\gamma\cos\theta +\frac{i\sin\theta }{1-2\lambda}}{-\cos\theta\left({\frac{1}{1-2\lambda}+\gamma}\right)} \neq \frac{1+\eta}{1-\eta}, \quad (z\in {D}^{*}).
\end{equation}
Rearranging the terms to isolate the core operator, \eqref{eq:iff_start} is equivalent to:
\begin{equation*}
\begin{aligned}
\frac{e^{i\theta} z(W_{\alpha, \beta}f(z))'}{(1-\lambda)W_{\alpha, \beta}f(z)+\lambda z (W_{\alpha, \beta}f(z))'} &- \gamma\cos\theta + \frac{i\sin\theta }{1-2\lambda} \\
&\neq \left( \frac{1+\eta}{1-\eta} \right) \left(\frac{-\cos\theta}{1-2\lambda}-\gamma \cos \theta\right).
\end{aligned}
\end{equation*}
Bringing all terms to the left side and finding a common denominator for the constants yields:
\begin{equation*}
\begin{aligned}
\frac{e^{i\theta} z(W_{\alpha, \beta}f(z))'}{(1-\lambda)W_{\alpha, \beta}f(z)+\lambda z( W_{\alpha, \beta}f(z))'} &+ \frac{-\gamma\cos\theta(1-2\lambda) + i\sin\theta}{1-2\lambda} \\
&+ \left( \frac{1+\eta}{1-\eta} \right) \left( \frac{\cos\theta+\gamma \cos \theta(1-2\lambda)}{1-2\lambda} \right) \neq 0.
\end{aligned}
\end{equation*}
Now after simplifying, we have
\begin{equation} \label{eq:cleared_denoms}
\begin{aligned}
&e^{i\theta} z(W_{\alpha, \beta}f(z))'(1-2\lambda)(1-\eta) \\
&+ \Big[ -\gamma\cos\theta(1-2\lambda)(1-\eta) + i\sin\theta(1-\eta) + (1+\eta)\cos\theta\big(1+\gamma(1-2\lambda)\big) \Big] \\
&\quad \times \Big( (1-\lambda)W_{\alpha, \beta}f(z)+\lambda z( W_{\alpha, \beta}f(z))' \Big) \neq 0.
\end{aligned}
\end{equation}
Multiplying equation \eqref{eq:cleared_denoms} by $e^{-i\theta}$ isolates the coefficient of $z(W_{\alpha, \beta}f(z))'$. 

Next, we express the operator and its derivative in terms of the Hadamard product (convolution) with $f(z)$. Based on the series expansion, we have:
\begin{equation} \label{eq:conv_1}
z(W_{\alpha, \beta}f(z))' = \frac{-1}{z} + \sum_{n=1}^{\infty}\frac{n}{\Gamma(\alpha n+\beta)}\frac{a_n z^n}{n!} = f(z) \ast \left[ \frac{-1}{z} + \sum_{n=1}^{\infty}\frac{n}{\Gamma(\alpha n+\beta)n!} z^n \right],
\end{equation}
and similarly for the denominator term:
\begin{equation} \label{eq:conv_2}
\begin{aligned}
(1-\lambda)W_{\alpha, \beta}f(z) &+ \lambda (z W_{\alpha, \beta}f(z))' = \frac{1}{z} + \sum_{n=1}^{\infty}\frac{1-\lambda+\lambda n}{\Gamma(\alpha n+\beta)}\frac{a_n z^n}{n!} \\
&= f(z) \ast \left[ \frac{1}{z} + \sum_{n=1}^{\infty}\frac{1-\lambda+\lambda n}{\Gamma(\alpha n+\beta)n!} z^n \right].
\end{aligned}
\end{equation}
Substituting the convolution identities \eqref{eq:conv_1} and \eqref{eq:conv_2} back into the rearranged equation \eqref{eq:cleared_denoms} (after multiplying by $e^{-i\theta}$), directly yields the assertion of Theorem \ref{thm1}.
\end{proof}

\begin{theorem}\label{thm2}
Suppose $\epsilon$ is a real number such that $0 \leq \epsilon < 1$. If $f \in \Sigma$ satisfies the condition 
\begin{equation}\label{eq6}
\left| \frac{ z(W_{\alpha, \beta}f(z))'}{(1-\lambda)W_{\alpha, \beta}f(z)+\lambda z( W_{\alpha, \beta}f(z))'} + 1 \right| \leq 1-\epsilon, \quad (z \in {D}^{*}),
\end{equation}
then $f \in \Sigma(\theta, \lambda, \gamma)$, provided that $\cos\theta \leq \frac{\epsilon-1}{1+\gamma}$.
\end{theorem}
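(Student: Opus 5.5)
Our plan is to check the defining inequality of $\Sigma(\theta,\lambda,\gamma)$ directly, pointwise, from hypothesis \eqref{eq6}. Write
$$Q(z)=\frac{z(W_{\alpha,\beta}f(z))'}{(1-\lambda)W_{\alpha,\beta}f(z)+\lambda z(W_{\alpha,\beta}f(z))'},\qquad z\in D^{*}.$$
By \eqref{eq6} we may write $Q(z)=-1+(1-\epsilon)u(z)$ with $|u(z)|\le 1$. Here $u$ is analytic wherever the denominator does not vanish, and we will take \eqref{eq6} as implicitly ensuring $Q$ is well defined on $D^{*}$. The membership condition is $\Re\big(e^{i\theta}Q(z)\big)>\gamma\cos\theta$, and we aim to verify it from this representation.

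First, multiply by $e^{i\theta}$ and take real parts:
$$\Re\big(e^{i\theta}Q(z)\big)=-\cos\theta+(1-\epsilon)\Re\big(e^{i\theta}u(z)\big)\ \ge\ -\cos\theta-(1-\epsilon),$$
using $\Re(e^{i\theta}u)\ge -|u|\ge -1$. It then suffices to have $-\cos\theta-(1-\epsilon)\ge\gamma\cos\theta$, which is exactly $(1+\gamma)\cos\theta\le \epsilon-1$. Since $\gamma>1$ gives $1+\gamma>0$, this is the stated hypothesis $\cos\theta\le\frac{\epsilon-1}{1+\gamma}$. Combining the two displays yields $\Re(e^{i\theta}Q)\ge\gamma\cos\theta$ on $D^{*}$.

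The main obstacle is upgrading this to the strict inequality required by the class. Equality at some $z_0$ forces three things simultaneously: $\cos\theta=\frac{\epsilon-1}{1+\gamma}$, $|u(z_0)|=1$, and $e^{i\theta}u(z_0)=-1$. We would first try to rule out $|u(z_0)|=1$ at an interior point by the maximum modulus principle applied to $u$, which is analytic and bounded by $1$ on $D^{*}$. Note that $Q$ extends analytically to $z=0$ with $Q(0)=-1/(1-2\lambda)$, obtained from the expansions \eqref{eq:conv_1} and \eqref{eq:conv_2} used in the proof of Theorem~\ref{thm1}. So $u$ extends analytically to $0$ as well, and hence $u$ is either a unimodular constant or satisfies $|u|<1$ throughout $D$.

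In the generic case $|u|<1$, the estimate $\Re(e^{i\theta}u)>-1$ is strict and we are done. In the degenerate case $u\equiv c$ with $|c|=1$, we would check directly that $c=-e^{-i\theta}$ is incompatible with $Q(0)=-1/(1-2\lambda)$ unless parameters coincide. If it is not incompatible, we would simply impose strict inequality $\cos\theta<\frac{\epsilon-1}{1+\gamma}$ in that boundary situation.

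We would also remark that the sign condition $\cos\theta\le(\epsilon-1)/(1+\gamma)<0$ sits in tension with $|\theta|<\pi/2$. We therefore read $\theta$ as unrestricted real here, consistent with the statement.
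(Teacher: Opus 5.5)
Your proof follows the paper's route: write the quotient as $-1+(1-\epsilon)u$, multiply by $e^{i\theta}$, use $\Re(e^{i\theta}u)\ge -|u|$, and reduce to $(1+\gamma)\cos\theta\le \epsilon-1$. Your maximum-modulus step is an improvement, because it justifies the strict inequality that the paper gets only by asserting a Schwarz function with $|w|<1$ and $w(0)=0$ (the latter fails, since $Q(0)=-1/(1-2\lambda)$); moreover, the degenerate case needs no extra hypothesis, since a unimodular constant $u$ must equal the real number $u(0)=-2\lambda/\big((1-2\lambda)(1-\epsilon)\big)$, so $e^{i\theta}u=-1$ forces $\cos\theta=-1<\frac{\epsilon-1}{1+\gamma}$ (as $\gamma>1$), which makes the final inequality strict.
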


\begin{proof}
By considering the inequality in \eqref{eq6} and applying the properties of subordination, we can write
\begin{equation*}
\frac{ z(W_{\alpha, \beta}f(z))'}{(1-\lambda)W_{\alpha, \beta}f(z)+\lambda z( W_{\alpha, \beta}f(z))'} = -1 + (1-\epsilon)w(z),
\end{equation*}
where $w$ is a holomorphic (analytic) function in ${D}$ satisfying $|w(z)| < 1$ and $w(0) = 0$. 

Multiplying both sides by $e^{i\theta}$ and taking the real part, we obtain
\begin{align*}
\Re \left( e^{i\theta} \frac{z(W_{\alpha, \beta}f(z))'}{(1-\lambda)W_{\alpha, \beta}f(z)+\lambda z( W_{\alpha, \beta}f(z))'} \right) 
&= \Re \Big( e^{i\theta} \big(-1 + (1-\epsilon)w(z)\big) \Big) \\
&= -\cos\theta + (1-\epsilon)\Re \big( e^{i\theta}w(z) \big).
\end{align*}
Using the fundamental inequality $\Re(c) \geq -|c|$ for any complex number $c$, and the fact that $|e^{i\theta}| = 1$, we have $\Re(e^{i\theta}w(z)) \geq -|w(z)|$. Therefore,
\begin{align*}
-\cos\theta + (1-\epsilon)\Re \big( e^{i\theta}w(z) \big) &\geq -\cos\theta - (1-\epsilon)|w(z)| \\
&> -\cos\theta - (1-\epsilon) \\
&= \epsilon - 1 - \cos\theta.
\end{align*}
To show that $f \in \Sigma(\theta, \lambda, \gamma)$, we require this real part to be strictly greater than $\gamma\cos\theta$. Setting up the inequality:
\begin{equation*}
\epsilon - 1 - \cos\theta \geq \gamma\cos\theta \implies \epsilon - 1 \geq \cos\theta(1+\gamma).
\end{equation*}
Rearranging this yields the condition $\cos\theta \leq \frac{\epsilon-1}{1+\gamma}$, which is given in the hypothesis. Thus, the required result is provided.
\end{proof}

By setting $\epsilon = 1 - (1+\gamma)\cos\theta$ in Theorem \ref{thm2} (which implies $1-\epsilon =- (1+\gamma)\cos\theta$), we obtain the following consequence. 

\begin{corollary}
If $f \in \Sigma$ satisfies the inequality
\begin{equation*}
\left| \frac{ z(W_{\alpha, \beta}f(z))'}{(1-\lambda)W_{\alpha, \beta}f(z)+\lambda z( W_{\alpha, \beta}f(z))'} + 1 \right| \leq (1+\gamma)\cos\theta, \quad (z \in {D}^{*}),
\end{equation*}
then $f \in \Sigma(\theta, \lambda, \gamma)$.
\end{corollary}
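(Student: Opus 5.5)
The corollary should follow from Theorem~\ref{thm2} by specializing the free parameter $\epsilon$. I will choose
\[
\epsilon = 1-(1+\gamma)\cos\theta,
\]
so that $1-\epsilon=(1+\gamma)\cos\theta$. With this choice the hypothesis of the corollary is exactly inequality \eqref{eq6} of Theorem~\ref{thm2}.

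First I check that this $\epsilon$ is admissible in Theorem~\ref{thm2}. We need $0\le\epsilon<1$, that is,
\[
0<(1+\gamma)\cos\theta\le 1 .
\]
The strict lower bound holds because $|\theta|<\pi/2$ and $\gamma>0$ (indeed $\gamma>1$ in the class definition). The upper bound is not automatic, so I treat it as an implicit assumption, namely $(1+\gamma)\cos\theta\le1$. If it fails, the right-hand side of the hypothesis exceeds $1$ and I would instead argue directly, as described in the last paragraph.

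Next I verify the side condition of Theorem~\ref{thm2}, which is $\cos\theta\le(\epsilon-1)/(1+\gamma)$. Substituting our $\epsilon$ turns this into
\[
\cos\theta\le-\cos\theta .
\]
This contradicts $\cos\theta>0$. Here lies the main obstacle: the sign conventions in Theorem~\ref{thm2} and in the corollary (as written by the paper) are incompatible. The paper's own remark writes $1-\epsilon=-(1+\gamma)\cos\theta$, which has the opposite sign. So a mechanical substitution is not enough, and I would repair the argument by rerunning the proof of Theorem~\ref{thm2} directly with the new radius.

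For the direct argument, write the quotient as $-1+R\,w(z)$ with $R=(1+\gamma)\cos\theta$ and $|w|<1$. Then
\[
\Re\bigl(e^{i\theta}(-1+Rw)\bigr)\ \ge\ -\cos\theta - R|w| \ >\ -\cos\theta-(1+\gamma)\cos\theta .
\]
Comparing this lower bound with the defining inequality of $\Sigma(\theta,\lambda,\gamma)$ should close the proof. Which version of the class inequality is intended matters here: the introduction uses $>-\gamma\cos\theta$ in analogous classes, while Theorem~\ref{thm2} uses $>\gamma\cos\theta$. I would adopt whichever version makes the comparison consistent, and note that the constant $(1+\gamma)\cos\theta$ is exactly what makes the boundary cases balance. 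The nontrivial work is this bookkeeping of signs; everything else is immediate from Theorem~\ref{thm2}.
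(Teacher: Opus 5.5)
Your proposal has a genuine gap: the last step, which is the whole proof, does not go through under any of the conventions you mention. You are right that the mechanical substitution fails. With $\epsilon=1-(1+\gamma)\cos\theta$, the side condition of Theorem~\ref{thm2} becomes $\cos\theta\le-\cos\theta$. The paper's own one-line derivation is also inconsistent: its parenthetical $1-\epsilon=-(1+\gamma)\cos\theta$ contradicts its choice of $\epsilon$.

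Write $Q$ for the quotient. Your direct estimate yields only $\Re(e^{i\theta}Q)>-(2+\gamma)\cos\theta$. For $\cos\theta>0$ this lies below $\gamma\cos\theta$ and also below $-\gamma\cos\theta$, since either comparison would need $\cos\theta\le 0$. The fault lies in the statement, not your estimate, when it is read with the printed class inequality $\Re(e^{i\theta}Q)>\gamma\cos\theta$. Take $\lambda=0$ and $f(z)=1/z$. Then $W_{\alpha,\beta}f(z)=1/z$ and $Q\equiv-1$. The hypothesis holds because $0\le(1+\gamma)\cos\theta$, yet $\Re(-e^{i\theta})=-\cos\theta$ is not $>\gamma\cos\theta$ when $\cos\theta\ge 0$. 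More generally, $Q(z)\to-1/(1-2\lambda)$ as $z\to 0$. So for $\cos\theta>0$, $0\le\lambda<1/2$, $\gamma>1$, the printed class is empty. For $\cos\theta<0$ the hypothesis can never hold, so the statement is only vacuously true. No sign bookkeeping proves the corollary as written; you must commit to a corrected, consistent reading and say so.

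There are two consistent repairs.

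(a) Keep the printed class. Theorem~\ref{thm2} itself forces $\cos\theta\le(\epsilon-1)/(1+\gamma)<0$, so your assumption $|\theta|<\pi/2$ is exactly the regime in which that theorem says nothing. The right choice is $\epsilon=1+(1+\gamma)\cos\theta$, admissible when $-1\le(1+\gamma)\cos\theta<0$; this is what the paper's parenthetical points to. The side condition then holds with equality, and the corollary follows with right-hand side $-(1+\gamma)\cos\theta$. Your lower estimate then gives exactly $-\cos\theta+(1+\gamma)\cos\theta=\gamma\cos\theta$.

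(b) Keep the printed radius $(1+\gamma)\cos\theta$ with $\cos\theta>0$. Then the class must be read as $\Re(e^{i\theta}Q)<\gamma\cos\theta$. This is what the normalization of $\tau$ in \eqref{eqtau} encodes when $\cos\theta>0$ and $0\le\lambda<1/2$, since its real denominator is then negative. The needed estimate is an upper bound:
$\Re(e^{i\theta}Q)=-\cos\theta+\Re\bigl(e^{i\theta}(Q+1)\bigr)\le-\cos\theta+|Q+1|\le\gamma\cos\theta$.
Strictness comes from the maximum modulus principle applied to $Q+1$. In the exceptional case a constant quotient forces $W_{\alpha,\beta}f=1/z$, where $\Re(e^{i\theta}Q)=-\cos\theta/(1-2\lambda)<\gamma\cos\theta$ directly.

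Only in reading (b) does the printed constant \emph{balance}, and there your lower-bound estimate points the wrong way. Once a reading is fixed, either proof is a one-line estimate, but your proposal commits to neither.
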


\begin{theorem}
    Assume that $f\in \Sigma(\theta,\lambda,\gamma)$. Then $$|a_1|\leq \frac{(1-2\lambda)\cos\theta(1+\gamma(1-2\lambda))}{(1-\lambda)\phi_1(\alpha, \beta)},$$

    $$|a_n|\leq \frac{\Lambda(1-2\lambda)}{(1-\lambda)^n \phi_n(\alpha, \beta)} 
\times \prod_{k=1}^{n-1} \frac{(k+1)(1-\lambda) + 2(1-\lambda+k\lambda)\Lambda}{k+2}.$$
\end{theorem}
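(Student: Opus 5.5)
The plan follows the classical Carathéodory--induction scheme: bound the $a_n$ by the sequence $A_n$ of Lemma~\ref{lem:sequence_An}, whose closed form is exactly the bound in the statement. With $\Lambda=\cos\theta(1+\gamma(1-2\lambda))$ as in that lemma, I start from the function $\tau$ of \eqref{eqtau}, which lies in the Carathéodory class. Write $\tau(z)=1+\sum_{n\ge1}\tau_n z^n$. The standard estimate $|\tau_n|\le 2$ will be the only analytic input.

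Put $H=W_{\alpha,\beta}f$, so $H(z)=z^{-1}+\sum\phi_n a_n z^n$. Clearing denominators in \eqref{eqtau} gives
\begin{equation*}
e^{i\theta}(1-2\lambda)\,zH'(z)=\Big[(\gamma\cos\theta(1-2\lambda)-i\sin\theta)-\Lambda\,\tau(z)\Big]\Big((1-\lambda)H(z)+\lambda zH'(z)\Big),
\end{equation*}
up to the sign and normalization conventions already used in the proof of Theorem~\ref{thm1}. I then insert the series $zH'=-z^{-1}+\sum n\phi_n a_n z^n$ and $(1-\lambda)H+\lambda zH'=(1-2\lambda)z^{-1}+\sum(1-\lambda+n\lambda)\phi_n a_n z^n$, and compare coefficients of $z^{n}$ for $n\ge0$. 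The $z^{-1}$ terms cancel by the choice of constants, which is how the normalization of $\tau$ was designed. The coefficient of $z^{0}$ gives $a_1$ in terms of $\tau_1$. The coefficient of $z^{n}$ gives a relation of the form
\begin{equation*}
(\text{linear factor in } n)(1-\lambda)\phi_{n+1}a_{n+1}=-\Lambda\Big[(1-2\lambda)\tau_{n+1}+\sum_{k=1}^{n}(1-\lambda+k\lambda)\phi_k a_k\,\tau_{n-k}\Big].
\end{equation*}
Taking moduli and using $|\tau_j|\le2$ yields
\begin{equation*}
|a_1|\le\frac{(1-2\lambda)\Lambda}{(1-\lambda)\phi_1},\qquad
|a_{n+1}|\le\frac{2\Lambda}{(n+2)(1-\lambda)\phi_{n+1}}\Big[1-2\lambda+\sum_{k=1}^{n}\phi_k(1-\lambda+k\lambda)|a_k|\Big].
\end{equation*}
These are precisely the defining relations of $A_1$ and $A_{n+1}$ in Lemma~\ref{lem:sequence_An}.

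Next I argue by induction. The base case $|a_1|\le A_1$ is the first inequality. If $|a_k|\le A_k$ for $k\le n$, then the recursive inequality gives $|a_{n+1}|\le A_{n+1}$, because its right-hand side is monotone in the $|a_k|$ and all coefficients $\phi_k(1-\lambda+k\lambda)$ are positive. The positivity uses $0\le\lambda<1/2$ and $\phi_k>0$ for $\beta>0$. Lemma~\ref{lem:sequence_An} then converts $A_n$ into the stated product formula, which proves the second inequality for $n\ge2$.

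The main obstacle is the coefficient comparison. The multiplier of $\phi_{n+1}a_{n+1}$ must come out as exactly $(n+2)(1-\lambda)$ times a harmless constant. Also, the leftover constant $(\gamma\cos\theta(1-2\lambda)-i\sin\theta)$ must combine with the left side and not spoil the bound, and the constants $\tau_0=1$ and $\tau_{n+1}$ must produce the term $1-2\lambda$ in the bracket. I would first verify this at low order by explicitly matching $z^0$ and $z^1$. If the constants do not align as in the lemma, I would adjust the normalization of $\tau$, for instance by absorbing $e^{i\theta}$ and $\Lambda$ into a rescaled $p=(\tau-1)$-type function, so that the identity takes the form ``$\sum(\text{weight})a_kz^k=\Lambda\,(\tau-1)\cdot(\text{series})$''. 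The proof then reduces to the standard Clunie-type induction.
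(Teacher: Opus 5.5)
Your strategy is the paper's: take the Carathéodory function $\tau$ of \eqref{eqtau}, clear denominators, compare coefficients, use $|\tau_j|\le 2$, induct, and finish with Lemma~\ref{lem:sequence_An}. Your cleared identity and the two inequalities you aim for are correct. The weak point is the coefficient comparison you defer as the main obstacle: your indexing is off by one, and the relations you state are false as written.

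To fix it, split $\tau=1+(\tau-1)$ and use $\gamma\cos\theta(1-2\lambda)-i\sin\theta-\Lambda=-e^{i\theta}$. With $G=(1-\lambda)H+\lambda zH'$ the identity becomes
\[
e^{i\theta}\bigl[(1-2\lambda)zH'+G\bigr]=-\Lambda\,(\tau(z)-1)\,G .
\]
Since $(1-2\lambda)n+(1-\lambda+n\lambda)=(n+1)(1-\lambda)$, the left side equals $e^{i\theta}\sum_{n\ge1}(n+1)(1-\lambda)\phi_na_nz^n$.

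This gives the correct matching:
\begin{itemize}
\item The $z^0$ coefficient contains no $a_k$ at all. It only forces $\tau_1=0$, because $f$ has no constant term.
\item $a_1$ comes from $z^1$ and is paired with $\tau_2$; this is the paper's relation $2e^{i\theta}(1-\lambda)\phi_1a_1=-\Lambda(1-2\lambda)\tau_2$.
\item The coefficient of $z^{n+1}$ is
\[
e^{i\theta}(n+2)(1-\lambda)\phi_{n+1}a_{n+1}=-\Lambda\Bigl[(1-2\lambda)\tau_{n+2}+\sum_{k=1}^{n}(1-\lambda+k\lambda)\phi_ka_k\,\tau_{n+1-k}\Bigr].
\]
\end{itemize}
So $\tau_0$ never enters the bracket. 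Together with the constant $\gamma\cos\theta(1-2\lambda)-i\sin\theta$, it is absorbed into the left side, and that is exactly what produces the multiplier $(n+2)(1-\lambda)$; the harmless constant is $e^{i\theta}$. The term $1-2\lambda$ in the bracket comes from $\tau_{n+2}$ times the $z^{-1}$ coefficient of $G$. No renormalization of $\tau$ is needed; your fallback of isolating $\tau-1$ is precisely the right bookkeeping. Because $|\tau_j|\le 2$ for every $j$, the index shift does not change your final inequalities. Once this computation is written out correctly, your induction and the appeal to the lemma go through as in the paper.

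Two smaller points. First, $\phi_k>0$ does not follow from $\beta>0$ alone when $-1<\alpha<0$, which the paper allows. In that case $\alpha k+\beta$ eventually becomes nonpositive, so $1/\Gamma(\alpha k+\beta)$ can vanish or be negative. You need $\alpha k+\beta>0$ for all $k$ (e.g.\ $\alpha\ge 0$), and the paper tacitly assumes this too, since $\phi_n$ sits in the denominator of the bound. Second, $\Re\tau>0$ is equivalent to the defining real part being $<\gamma\cos\theta$, not the printed $>$. With $>$ the class is empty, because the quotient tends to $-1/(1-2\lambda)$ as $z\to 0$. Like the paper, you are really using the reversed inequality.
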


 \begin{proof}
Let $f \in \Sigma$ and let the operator be denoted as $W_{\alpha, \beta}f(z) = \frac{1}{z} + \sum_{n=1}^{\infty} \phi_n a_n z^n$, where $\phi_n = \frac{1}{\Gamma(\alpha n + \beta) n!}$. Then there exists $\tau \in {P}$ by \eqref{eqtau}, we have
\begin{equation} \label{eq:main}
\begin{aligned}
e^{i\theta} z(W_{\alpha, \beta}f(z))' &= \left((1-\lambda)W_{\alpha, \beta}f(z)+\lambda z (W_{\alpha, \beta}f(z))'\right) \\
&\quad \times \left(\gamma\cos\theta -\frac{i\sin\theta }{1-2\lambda}-\cos\theta\left({\frac{1}{1-2\lambda}+\gamma}\right)\tau(z)\right).
\end{aligned}
\end{equation}
For convenience, let $\Lambda = \cos\theta(1+\gamma(1-2\lambda))$. Since $\tau \in {P}$, it has the series expansion $\tau(z) = 1 + \sum_{n=1}^\infty \tau_n z^n$. Substituting the series expansions into \eqref{eq:main} and simplifying the constant term in the second bracket, we have:
\begin{equation} \label{eq:series}
\begin{aligned}
e^{i\theta} \left[ -\frac{1}{z} + \sum_{n=1}^{\infty} n \phi_n a_n z^n \right] 
&= \left[ \frac{1-2\lambda}{z} + \sum_{n=1}^{\infty} (1-\lambda+n\lambda) \phi_n a_n z^n \right] \\
&\quad \times \left[ -\frac{e^{i\theta}}{1-2\lambda} - \frac{\Lambda}{1-2\lambda} \sum_{n=1}^{\infty} \tau_n z^n \right].
\end{aligned}
\end{equation}
Evaluating the coefficient of $z^n$ on both sides of \eqref{eq:series} yields:
\begin{equation} \label{eq:coeff_1}
2e^{i\theta}(1-\lambda)\phi_1 a_1 = -\Lambda (1-2\lambda)\tau_2,
\end{equation}
and for $n \ge 2$:
\begin{equation} \label{eq:coeff_n}
\begin{aligned}
e^{i\theta}(n+1)(1-\lambda)\phi_n a_n &= -e^{-i\theta} \Lambda \Bigg[ (1-2\lambda)\tau_{n+1} \\
&\quad + \sum_{k=1}^{n-1} \phi_k (1-\lambda+k\lambda)a_k \tau_{n-k} \Bigg].
\end{aligned}
\end{equation}
By observing the well-known Carathéodory Lemma that $|\tau_n| \le 2$ for $n \in \mathbb{N}$, we find from \eqref{eq:coeff_1} that:
\begin{equation*} \label{eq:bound_1}
|a_1| \le \frac{\Lambda(1-2\lambda)}{(1-\lambda)\phi_1},
\end{equation*}
and from \eqref{eq:coeff_n}:
\begin{equation} \label{eq:bound_n}
|a_n| \le \frac{2\Lambda}{(n+1)(1-\lambda)\phi_n} \left[ 1-2\lambda + \sum_{k=1}^{n-1} \phi_k (1-\lambda+k\lambda)|a_k| \right].
\end{equation}
Now we define the sequence $\{A_n\}_{n=1}^{\infty}$ as follows:
\begin{equation*} \label{eq:A_1}
A_1 = \frac{\cos\theta(1+\gamma(1-2\lambda))(1-2\lambda)}{(1-\lambda)\phi_1},
\end{equation*}
\begin{equation} \label{eq:A_n}
\begin{aligned}
A_{n+1} &= \frac{2\cos\theta(1+\gamma(1-2\lambda))}{(n+2)(1-\lambda)\phi_{n+1}} \\
&\quad \times \left[ 1-2\lambda + \sum_{k=1}^{n} \phi_k (1-\lambda+k\lambda)A_k \right].
\end{aligned}
\end{equation}
In order to prove that $|a_n| \le A_n$ $(n \in \mathbb{N})$, we use the principle of mathematical induction. Note that:
\begin{equation*}
|a_1| \le A_1 = \frac{\cos\theta(1+\gamma(1-2\lambda))(1-2\lambda)}{(1-\lambda)\phi_1}.
\end{equation*}
Therefore, assume that $|a_k| \le A_k$ for $k = 1, 2, \dots, n; \; n \in \mathbb{N}$. Combining \eqref{eq:bound_n} and \eqref{eq:A_n}, we get:
\begin{equation*}
\begin{aligned}
|a_{n+1}| &\le \frac{2\cos\theta(1+\gamma(1-2\lambda))}{(n+2)(1-\lambda)\phi_{n+1}} \left[ 1-2\lambda + \sum_{k=1}^{n} \phi_k (1-\lambda+k\lambda)|a_k| \right] \\
&\le \frac{2\cos\theta(1+\gamma(1-2\lambda))}{(n+2)(1-\lambda)\phi_{n+1}} \left[ 1-2\lambda + \sum_{k=1}^{n} \phi_k (1-\lambda+k\lambda)A_k \right] \\
&= A_{n+1}.
\end{aligned}
\end{equation*}
Hence, by the principle of mathematical induction, we have $|a_n| \le A_n$ for all $n \in \mathbb{N}$ as desired.\\
Employing the lemma \ref{lem:sequence_An} we get the desired result, i.e coefficient bound for $|a_n|$ for the class of function $f \in \Sigma(\theta, \lambda, \gamma).$
\end{proof}

\section{Radii of Starlikeness and Convexity as applications}

Specifically, we investigate the radii of meromorphic starlikeness and meromorphic convexity of order $\rho$ ($0 \le \rho < 1$) for the integral operator $W_{\alpha, \beta}f(z)$. The following theorem successfully establishes the maximal disks $|z| < r$ inside which the operator maps to domains that are strictly starlike and convex. The proofs inherently rely on the application of the absolute coefficient bounds $\{A_n\}_{n=1}^{\infty}$ derived earlier in Lemma \ref{lem:sequence_An}.

Before stating the main theorem of this section, we recall the foundational definitions and sufficient conditions for meromorphic starlikeness and convexity.

\begin{definition}\cite{mogra}\label{def:starlike}
A meromorphic function $F(z)$ of the form $F(z) = \frac{1}{z} + \sum_{n=1}^{\infty} c_n z^n$ is said to be meromorphically starlike of order $\rho$ ($0 \le \rho < 1$) in the disk $|z| < r$ if it satisfies the analytic condition:
\begin{equation*}
-\Re \left( \frac{zF'(z)}{F(z)} \right) > \rho, \quad (0<|z| < r).
\end{equation*}
A well-known sufficient description for a function to satisfy this property is:
\begin{equation}\label{eq:star_sufficient}
\left| \frac{zF'(z)}{F(z)} + 1 \right| \le 1 - \rho, \quad (0<|z| < r).
\end{equation}
\end{definition}

\begin{definition}\cite{mogra}\label{def:convex}
A meromorphic function $F(z)$ of the form $F(z) = \frac{1}{z} + \sum_{n=1}^{\infty} c_n z^n$ is said to be meromorphically convex of order $\rho$ ($0 \le \rho < 1$) in the disk $0<|z| < r$ if it satisfies the analytic condition:
\begin{equation*}
-\Re \left( 1 + \frac{zF''(z)}{F'(z)} \right) > \rho, \quad (0<|z| < r).
\end{equation*}
A sufficient charecterization for a function to satisfy this property is:
\begin{equation}\label{eq:conv_sufficient}
\left| \frac{zF''(z)}{F'(z)} + 2 \right| \le 1 - \rho, \quad (|z| < r).
\end{equation}
\end{definition}

Utilizing the sufficient inequalities \eqref{eq:star_sufficient} and \eqref{eq:conv_sufficient}, we now determine the precise radii for the operator $W_{\alpha, \beta}f(z)$.



\begin{theorem}\label{thm:radii}
Let $f \in \Sigma(\theta, \lambda, \gamma)$ and let $\{A_n\}_{n=1}^{\infty}$ be the sequence of coefficient bounds such that $|a_n| \le A_n$ for all $n \in \mathbb{N}$. Then the operator $W_{\alpha, \beta}f(z)$ is:
\begin{enumerate}
    \item[(i)] Meromorphically starlike of order $\rho$ ($0 \le \rho < 1$) in the disk $|z| < r_1$, where $r_1$ is the largest value satisfying the equation:
    \begin{equation} \label{eq:radius_star}
    \sum_{n=1}^{\infty} \left( \frac{n+2-\rho}{1-\rho} \right) \phi_n A_n r^{n+1} \le 1.
    \end{equation}
    
    \item[(ii)] Meromorphically convex of order $\rho$ ($0 \le \rho < 1$) in the disk $|z| < r_2$, where $r_2$ is the largest value satisfying the equation:
    \begin{equation} \label{eq:radius_conv}
    \sum_{n=1}^{\infty} \frac{n(n+2-\rho)}{1-\rho} \phi_n A_n r^{n+1} \le 1.
    \end{equation}
\end{enumerate}
\end{theorem}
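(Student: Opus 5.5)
Set $F(z)=W_{\alpha,\beta}f(z)=\frac{1}{z}+\sum_{n\ge1}c_nz^n$ with $c_n=\phi_n a_n$. By the preceding coefficient theorem together with Lemma \ref{lem:sequence_An}, $|c_n|\le \phi_n A_n$. The strategy is to verify the sufficient conditions \eqref{eq:star_sufficient} and \eqref{eq:conv_sufficient} on $0<|z|\le r$ by bounding numerator and denominator with the triangle inequality. Each condition then reduces to a coefficient sum which, after replacing $|c_n|$ by $\phi_nA_n$, becomes \eqref{eq:radius_star} or \eqref{eq:radius_conv}.

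For (i), multiply numerator and denominator by $z$:
\[
\frac{zF'(z)}{F(z)}+1=\frac{\sum_{n\ge1}(n+1)c_nz^{n+1}}{1+\sum_{n\ge1}c_nz^{n+1}}.
\]
Hence, for $|z|=r$, the inequality $|zF'/F+1|\le 1-\rho$ follows from
\[
\sum (n+1)|c_n|r^{n+1}\le(1-\rho)\Bigl(1-\sum|c_n|r^{n+1}\Bigr),
\]
that is, from
\[
\sum_{n\ge1}\frac{n+2-\rho}{1-\rho}\,|c_n|\,r^{n+1}\le 1 .
\]
Replacing $|c_n|$ by $\phi_nA_n$ gives \eqref{eq:radius_star}. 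The left side is increasing in $r$, so the bound holds for every $r\le r_1$. This also guarantees that the denominator does not vanish.

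For (ii), write $zF'(z)=-\frac1z+\sum nc_nz^n$ and $z^2F''(z)=\frac{2}{z}+\sum n(n-1)c_nz^n$. Then
\[
\frac{zF''}{F'}+2=\frac{\sum n(n+1)c_nz^{n+1}}{-1+\sum nc_nz^{n+1}}.
\]
The same estimate gives the sufficient condition
\[
\sum n(n+1)|c_n|r^{n+1}\le(1-\rho)\Bigl(1-\sum n|c_n|r^{n+1}\Bigr),
\]
which is equivalent to
\[
\sum_{n\ge1}\frac{n(n+2-\rho)}{1-\rho}\,|c_n|\,r^{n+1}\le1,
\]
and hence to \eqref{eq:radius_conv}. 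Alternatively, one can apply (i) to $-zF'(z)$, whose coefficients are $-n c_n$, using the Alexander-type equivalence that $F$ is convex exactly when $-zF'$ is starlike.

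The remaining issue is convergence and the existence of a positive radius: the series must converge for small $r$ so that $r_1$ and $r_2$ are well defined. I expect this to be the main obstacle. From the ratio formula \eqref{eq:An_ratio}, $\phi_{n+1}A_{n+1}/(\phi_nA_n)$ tends to a finite limit of roughly $1+2\lambda\Lambda/(1-\lambda)$, so $\phi_nA_n$ grows at most geometrically. The series therefore has a positive radius of convergence, and the sum tends to $0$ as $r\to0$, so a largest admissible $r$ exists by continuity and monotonicity. The word ``precise'' should be read as ``largest $r$ for which the sufficient condition holds''. Sharpness would require an extremal function attaining $|a_n|=A_n$ with all coefficients aligned, which I would not attempt.
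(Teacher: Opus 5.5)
Your proposal is correct and follows the paper's proof essentially step for step: clear the pole by multiplying through by $z$ (respectively $-z^2$), bound numerator and denominator of $\frac{zF'}{F}+1$ and $\frac{zF''}{F'}+2$ by the triangle inequality, rearrange into $\sum (n+2-\rho)\phi_n|a_n|r^{n+1}\le 1-\rho$ and $\sum n(n+2-\rho)\phi_n|a_n|r^{n+1}\le 1-\rho$, and then majorize using $|a_n|\le A_n$. Your extra remarks fill in details the paper leaves implicit: the non-vanishing of the denominator, monotonicity in $r$, the Alexander-type reduction of (ii) to (i), and the positivity of the radius via $\phi_{n+1}A_{n+1}/(\phi_nA_n)\to 1+\frac{2\lambda\Lambda}{1-\lambda}$.
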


\begin{proof}
Let $H(z) = W_{\alpha, \beta}f(z) = z^{-1} + \sum_{n=1}^{\infty} \phi_n a_n z^n$. 

\textbf{Proof of (i):} 
To prove that $H(z)$ is meromorphically starlike of order $\rho$, it suffices to show that
\begin{equation*}
\left| \frac{zH'(z)}{H(z)} + 1 \right| \le 1 - \rho, \quad \text{for } |z| < r_1.
\end{equation*}
Using the series expansions of $H(z)$ and $zH'(z)$, we obtain
\begin{equation*}
\left| \frac{zH'(z) + H(z)}{H(z)} \right| = \left| \frac{\sum_{n=1}^{\infty} (n+1)\phi_n a_n z^n}{z^{-1} + \sum_{n=1}^{\infty} \phi_n a_n z^n} \right|.
\end{equation*}
Multiplying the numerator and denominator by $z$ yields
\begin{equation*}
\left| \frac{\sum_{n=1}^{\infty} (n+1)\phi_n a_n z^{n+1}}{1 + \sum_{n=1}^{\infty} \phi_n a_n z^{n+1}} \right| \le \frac{\sum_{n=1}^{\infty} (n+1)\phi_n |a_n| |z|^{n+1}}{1 - \sum_{n=1}^{\infty} \phi_n |a_n| |z|^{n+1}}.
\end{equation*}
Setting $|z| = r$, the required condition becomes
\begin{equation*}
\sum_{n=1}^{\infty} (n+1)\phi_n |a_n| r^{n+1} \le (1-\rho) \left( 1 - \sum_{n=1}^{\infty} \phi_n |a_n| r^{n+1} \right).
\end{equation*}
Rearranging the terms, we get
\begin{equation*}
\sum_{n=1}^{\infty} (n+2-\rho)\phi_n |a_n| r^{n+1} \le 1 - \rho.
\end{equation*}
Dividing by $(1-\rho)$ and substituting the established bounds $|a_n| \le A_n$, we obtain equation \eqref{eq:radius_star}.

\vspace{0.3cm}
\textbf{Proof of (ii):}
To prove that $H(z)$ is meromorphically convex of order $\rho$, it suffices to show that
\begin{equation*}
\left| \frac{zH''(z)}{H'(z)} + 2 \right| \le 1 - \rho, \quad \text{for } |z| < r_2.
\end{equation*}
Using the series expansions, we calculate
\begin{equation*}
zH''(z) + 2H'(z) = \sum_{n=1}^{\infty} n(n-1)\phi_n a_n z^{n-1} + 2\sum_{n=1}^{\infty} n\phi_n a_n z^{n-1} = \sum_{n=1}^{\infty} n(n+1)\phi_n a_n z^{n-1}.
\end{equation*}
Thus, we evaluate
\begin{equation*}
\left| \frac{zH''(z) + 2H'(z)}{H'(z)} \right| = \left| \frac{\sum_{n=1}^{\infty} n(n+1)\phi_n a_n z^{n-1}}{-z^{-2} + \sum_{n=1}^{\infty} n\phi_n a_n z^{n-1}} \right|.
\end{equation*}
Multiplying the numerator and denominator by $-z^2$, we have
\begin{equation*}
\le \frac{\sum_{n=1}^{\infty} n(n+1)\phi_n |a_n| |z|^{n+1}}{1 - \sum_{n=1}^{\infty} n\phi_n |a_n| |z|^{n+1}}.
\end{equation*}
Setting $|z| = r$ and forcing this to be bounded by $(1-\rho)$, we obtain
\begin{equation*}
\sum_{n=1}^{\infty} n(n+1)\phi_n |a_n| r^{n+1} \le (1-\rho) \left( 1 - \sum_{n=1}^{\infty} n\phi_n |a_n| r^{n+1} \right).
\end{equation*}
Rearranging the terms yields
\begin{equation*}
\sum_{n=1}^{\infty} n(n+2-\rho)\phi_n |a_n| r^{n+1} \le 1 - \rho.
\end{equation*}
Dividing by $(1-\rho)$ and substituting $|a_n| \le A_n$ establishes equation \eqref{eq:radius_conv}. This completes the proof of Theorem \ref{thm:radii}.
\end{proof}

\vspace{0.5cm}
\begin{example}[Meromorphic Starlikeness for $n=1$]
To illustrate the radius of meromorphic starlikeness $r_1$, let us consider a simplified extremal function within our class where the first term dominates, such that $\phi_1 A_1 = 1$ and all subsequent coefficients are zero. 

Applying equation \eqref{eq:radius_star} for $n=1$, the condition for meromorphic starlikeness of order $\rho$ reduces to:
\begin{equation*}
\left( \frac{1+2-\rho}{1-\rho} \right) r^2 \le 1 \implies r_1(\rho) \leq \sqrt{\frac{1-\rho}{3-\rho}}.
\end{equation*}
This explicitly shows that for standard meromorphic starlikeness ($\rho=0$), the maximal radius is $r_1(0) \leq\frac{1}{\sqrt{3}} \approx 0.577$. As the requested order $\rho$ approaches $1$, the allowable radius shrinks to $0$.
\end{example}

\begin{figure}[htbp]
\centering
\begin{tikzpicture}
\begin{axis}[
    title={Example 1: Radius of Starlikeness $r_1(\rho)$},
    xlabel={Order of Starlikeness ($\rho$)},
    ylabel={Maximal Radius ($r_1$)},
    xmin=0, xmax=1,
    ymin=0, ymax=0.65,
    grid=major,
    grid style={dashed, gray!50},
    domain=0:0.99, 
    samples=100,
    width=8.5cm,
    height=6cm,
    thick
]
\addplot [blue, very thick] {sqrt((1-x)/(3-x))};
\addlegendentry{$r_1 = \sqrt{\frac{1-\rho}{3-\rho}}$}
\end{axis}
\end{tikzpicture}
\caption{The graphical representation of the maximal radius of meromorphic starlikeness $r_1$ as a function of the order $\rho$ for the dominant leading term ($n=1$).}
\label{fig:star_graph}
\end{figure}

\vspace{0.5cm}
\begin{example}[Meromorphic Convexity for $n=2$]
To illustrate the stricter geometric requirement of meromorphic convexity, consider an extremal case where the $n=2$ term is dominant, such that $\phi_2 A_2 = 1$. 

Applying equation \eqref{eq:radius_conv} for $n=2$, the condition for meromorphic convexity of order $\rho$ becomes:
\begin{equation*}
\frac{2(2+2-\rho)}{1-\rho} r^3 \le 1 \implies \left( \frac{8-2\rho}{1-\rho} \right) r^3 \le 1.
\end{equation*}
The radius gives $r_2(\rho) \leq \left( \frac{1-\rho}{8-2\rho} \right)^{1/3}$. For standard meromorphic convexity ($\rho=0$), the maximal radius is $r_2(0) \leq \left(\frac{1}{8}\right)^{1/3} = 0.5$. 
\end{example}

\begin{figure}[htbp]
\centering
\begin{tikzpicture}
\begin{axis}[
    title={Example 2: Radius of Convexity $r_2(\rho)$},
    xlabel={Order of Convexity ($\rho$)},
    ylabel={Maximal Radius ($r_2$)},
    xmin=0, xmax=1,
    ymin=0, ymax=0.55,
    grid=major,
    grid style={dashed, gray!50},
    domain=0:0.99,
    samples=100,
    width=8.5cm,
    height=6cm,
    thick
]
\addplot [red, very thick] {(abs((1-x)/(8-2*x)))^(1/3)};
\addlegendentry{$r_2 = \sqrt[3]{\frac{1-\rho}{8-2\rho}}$}
\end{axis}
\end{tikzpicture}
\caption{The graphical representation of the maximal radius of meromorphic convexity $r_2$ as a function of the order $\rho$ for a higher-order dominant term ($n=2$).}
\label{fig:conv_graph}
\end{figure}

\section{Conclusion}
In the present investigation, we have successfully defined and systematically studied a new subclass $\Sigma(\theta, \lambda, \gamma)$ of meromorphic functions in the punctured unit disk ${D}^*$, utilizing the generalized linear operator $W_{\alpha, \beta}$. The primary achievements of this paper include the derivation of absolute coefficient bounds using rigorous mathematical induction, which formed the essential foundation for the geometric properties explored. We successfully established the integral representation of these functions, along with their necessary and sufficient convolution conditions. Moreover, by applying the derived coefficient estimates, we computed the exact radii of meromorphic starlikeness and meromorphic convexity of order $\rho$ ($0 \le \rho < 1$). 

The findings of this paper not only contribute new theorems to the study of meromorphic functions but also provide a generalized framework. By specializing the parameters (such as $\theta, \lambda,$ or $\gamma$), the results established here can be reduced to unify several well-known subclasses in the existing literature. For future research directions, the methodology and the operator $W_{\alpha, \beta}$ utilized in this study could be extended to investigate bounds for Fekete-Szegö functionals, Toeplitz determinants, Hankel determinants, or adapted to the rapidly developing field of quantum ($q$-) calculus for meromorphic functions.

{}

\end{document}